\documentclass[a4paper,oneside,11pt]{article}%
\usepackage{makeidx}
\usepackage[english]{babel}
\usepackage{amsmath}
\usepackage{amsfonts}
\usepackage{amssymb}
\usepackage{stmaryrd}
\usepackage{graphicx}
\usepackage{mathrsfs}
\usepackage{cite}
\usepackage[colorlinks,linkcolor=red,anchorcolor=blue,citecolor=blue,urlcolor=blue]{hyperref}
\usepackage[symbol*,ragged]{footmisc}
\providecommand{\U}[1]{\protect\rule{.1in}{.1in}}
\providecommand{\U}[1]{\protect \rule{.1in}{.1in}}

\pagenumbering{arabic}
\setlength{\textwidth}{145mm}
\setlength{\textheight}{225mm}
\headsep=20pt \topmargin=-5mm \oddsidemargin=0.46cm
\evensidemargin=0.46cm \raggedbottom
\newtheorem{theorem}{Theorem}[section]

\newtheorem{lemma}[theorem]{Lemma}

\newenvironment{proof}[1][Proof]{\noindent \textbf{#1.} }{\  \rule{0.5em}{0.5em}}
\numberwithin{equation}{section}

\begin{document}

\title{On the Sum of Divisors of Mixed Powers }

\author{Jinjiang Li\footnotemark[1]\,\,\,\, \, \& \,\, Min Zhang\footnotemark[2]    \vspace*{-4mm} \\
$\textrm{\small Department of Mathematics, China University of Mining and Technology}^{*\,\dag}$
                    \vspace*{-4mm} \\
     \small  Beijing 100083, P. R. China  }

\footnotetext[2]{Corresponding author. \\
    \quad\,\, \textit{ E-mail addresses}: \href{mailto:jinjiang.li.math@gmail.com}{jinjiang.li.math@gmail.com} (J. Li), \href{mailto:min.zhang.math@gmail.com}{min.zhang.math@gmail.com} (M. Zhang).   }

\date{}
\maketitle


{\textbf{Abstract}}: Let~$d(n)$~denote the Dirichlet divisor function. Define
\begin{equation*}
  \mathcal{S}_{k}(x)=\sum_{\substack{1\leqslant n_1,n_2,n_3 \leqslant x^{1/2} \\ 1\leqslant n_4\leqslant x^{1/k} }} d(n_1^2+n_2^2+n_3^2+n_4^k),
  \qquad 3\leqslant k\in \mathbb{N}.
\end{equation*}
In this paper, we establish an asymptotic formula of~$\mathcal{S}_k(x)$~and prove that
\begin{equation*}
   \mathcal{S}_k(x)=C_1(k)x^{3/2+1/k}\log x+C_2(k)x^{3/2+1/k}+O(x^{3/2+1/k-\delta_k+\varepsilon}),
\end{equation*}
where~$C_1(k),\,C_2(k)$~are two constants depending only on~$k,$~with~$\delta_3=\frac{19}{60},\,\delta_4=\frac{5}{24},\,\delta_5=\frac{19}{140},\,\delta_6=\frac{25}{192},\,
\delta_7=\frac{457}{4032},\,\delta_k=\frac{1}{k+2}+\frac{1}{2k^2(k-1)}$~for~$k\geqslant8.$~

{\textbf{Keywords:}} Divisor function; circle method; mixed power; asymptotic formula

\textbf{Mathematics Subject Classification 2010:} 11P05,\,11P32,\,11P55

\section{Introduction and main result}

   Let~$d(n)$~be the Dirichlet divisor function. Gafurov~\cite{Gafurov-1,Gafurov-2} studied the number of
divisors of a binary quadratic form and obtained the asymptotic formula
\begin{equation*}
   \sum_{1\leqslant m,n\leqslant x}d(m^2+n^2)=A_1x^2\log x+A_2x^2+O(x^{5/3}\log^9x),
\end{equation*}
where $A_1$ and $A_2$ are certain constants. Later this result was improved by Yu \cite{Yu} ,who gave the asymptotic formula
\begin{equation*}
   \sum_{1\leqslant m,n\leqslant x}d(m^2+n^2)=A_1x^2\log x+A_2x^2+O(x^{3/2+\varepsilon}),
\end{equation*}
for any fixed $\varepsilon>0.$

   In 2000, Calder\'{a}n and de Velasco~\cite{Calderon-Velasco} studied the divisors of the quadratic form~$m_1^2+m_2^2+m_3^2$ and established
   the asymptotic formula
\begin{equation}\label{Calderon-Velasco}
   \sum_{1\leqslant m_1,m_2,m_3\leqslant x}d(m_1^2+m_2^2+m_3^2)=\frac{8\zeta(3)}{5\zeta(5)}x^3\log x+O(x^{3}).
\end{equation}
In 2012, Guo and Zhai~\cite{Guo-Zhai} improved (\ref{Calderon-Velasco}) to
\begin{equation*}
   \sum_{1\leqslant m_1,m_2,m_3\leqslant x}d(m_1^2+m_2^2+m_3^2)=2C_1I_1x^3\log x+(C_1I_2+C_2I_1)x^3+O(x^{8/3+\varepsilon}),
\end{equation*}
where
\begin{equation*}
    C_1=\sum_{q=1}^\infty \frac{1}{q^4} \sum_{\substack{a=1\\(a,q)=1}}^q\bigg(\sum_{h=1}^qe\Big(\frac{ah^2}{q}\Big)\bigg)^3,
\end{equation*}
\begin{equation*}
    C_2=\sum_{q=1}^\infty \frac{-2\log q+2\gamma}{q^4} \sum_{\substack{a=1\\(a,q)=1}}^q\bigg(\sum_{h=1}^qe\Big(\frac{ah^2}{q}\Big)\bigg)^3,
\end{equation*}
\begin{equation*}
    I_1=\int_{-\infty}^{+\infty}\Big(\int_0^1e(\mu^2\beta)\mathrm{d}\mu\Big)^3\Big(\int_0^3e(-\mu\beta)\mathrm{d}\mu\Big)\mathrm{d}\beta
\end{equation*}
 and
\begin{equation*}
    I_2=\int_{-\infty}^{+\infty}\Big(\int_0^1e(\mu^2\beta)\mathrm{d}\mu\Big)^3\Big(\int_0^3e(-\mu\beta)\log\mu\mathrm{d}\mu\Big)\mathrm{d}\beta.
\end{equation*}
Later, Zhao \cite{Zhao} improved the error term $O(x^{8/3+\varepsilon})$ to $O(x^2\log^7x).$ Moreover, L\"{u} and Mu~\cite{Lv-Mu} consider the nonhomogeneous
case. They proved that, for~$k\geqslant3,$~there holds
\begin{equation*}
   \sum_{\substack{1\leqslant n_1,n_2\leqslant x^{1/2}\\1\leqslant n_3\leqslant x^{1/k}}}d(n_1^2+n_2^2+n_3^k)=
   A(k)x^{1+1/k}\log x+B(k)x^{1+1/k}+O(x^{1+1/k-\theta(k)+\varepsilon}),
\end{equation*}
where~$A(k),\,B(k)$~are two constants depending only on $k$, $\theta(3)=5/42,\,\theta(4)=1/16,\,\theta(5)=1/40,\,\theta(k)=1/(k2^{k-1})$ for $6\leqslant k\leqslant 7$ and $\theta(k)=1/({2k^2(k-1)})$ for $k\geqslant8$.

   In 2014, Hu~\cite{Hu} considered the divisors of the quaternary form $m_1^2+m_2^2+m_3^2+m_4^2$ and obtained
\begin{eqnarray}
   &   & \sum_{1\leqslant m_1,m_2,m_3,m_4\leqslant x}d(m_1^2+m_2^2+m_3^2+m_4^2)   \nonumber   \\
   &   &   \qquad\qquad= 2\mathcal{C}_1\mathcal{I}_1x^4\log x+(\mathcal{C}_1\mathcal{I}_2+\mathcal{C}_2\mathcal{I}_1)x^4+O(x^{7/2+\varepsilon}),
\end{eqnarray}
where
\begin{equation*}
   \mathcal{C}_1=\sum_{q=1}^{\infty}\frac{1}{q^5}\sum_{\substack{a=1\\(a,q)=1}}^{q}\bigg(\sum_{r=1}^qe\Big(\frac{ar^2}{q}\Big)\bigg)^4,
\end{equation*}
\begin{equation*}
   \mathcal{C}_2=\sum_{q=1}^{\infty}\frac{-2\log q+2\gamma}{q^5}\sum_{\substack{a=1\\(a,q)=1}}^{q}\bigg(\sum_{r=1}^qe\Big(\frac{ar^2}{q}\Big)\bigg)^4,
\end{equation*}
\begin{equation*}
  \mathcal{I}_1=\int_{-\infty}^{+\infty}\bigg(\int_0^1e(u^2\lambda)\mathrm{d}u\bigg)^4\bigg(\int_0^4e(-u\lambda)\mathrm{d}u\bigg)\mathrm{d}\lambda
\end{equation*}
and
\begin{equation*}
    \mathcal{I}_2=\int_{-\infty}^{+\infty}\bigg(\int_0^1e(u^2\lambda)\mathrm{d}u\bigg)^4\bigg(\int_0^4e(-u\lambda)\log u\mathrm{d}u\bigg)\mathrm{d}\lambda.
\end{equation*}
Later, Liu and Hu~\cite{Liu-Hu} improved the error term $O\big(x^{7/2+\varepsilon}\big)$ to $O(x^3\log x)$.

  In this paper, we consider the nonhomogeneous case of the form $n_1^2+n_2^2+n_3^2+n_4^k$ and establish the following theorem.

\begin{theorem}\label{mixed-power-theorem}
   Let
  \begin{equation*}
     S_{k}(x)=\sum_{\substack{1\leqslant n_1,n_2,n_3 \leqslant x^{1/2} \\ 1\leqslant n_4\leqslant x^{1/k} }} d(n_1^2+n_2^2+n_3^2+n_4^k),
     \qquad 3\leqslant k\in \mathbb{N}.
  \end{equation*}
  Then we have
  \begin{equation*}
     \mathcal{S}_k(x)=\mathfrak{S}_1\mathfrak{J}_1x^{3/2+1/k}\log x+(\mathfrak{S}_1\mathfrak{J}_2+\mathfrak{S}_2\mathfrak{J}_1)x^{3/2+1/k}+O(x^{3/2+1/k-\delta_k+\varepsilon}),
  \end{equation*}
where
\begin{eqnarray*}
  \delta_3=\frac{19}{60},\qquad\delta_4=\frac{5}{24},\qquad\delta_5=\frac{19}{140},\qquad\delta_6=\frac{25}{192},\,  \\
  \delta_7=\frac{457}{4032},\qquad\delta_k=\frac{1}{k+2}+\frac{1}{2k^2(k-1)} \textrm{\quad for\quad} k\geqslant 8,
\end{eqnarray*}
\begin{equation*}
   \mathfrak{S}_1=\sum_{q=1}^{\infty}\frac{1}{q^5}\sum_{ \substack{a=1\\(a,q)=1} }^q
          \bigg(\sum_{r=1}^{q}e\left(\frac{ar^2}{q}\right)\bigg)^3
          \bigg(\sum_{r=1}^{q}e\left(\frac{ar^k}{q}\right)\bigg),
\end{equation*}
\begin{equation*}
    \mathfrak{S}_2=\sum_{q=1}^{\infty}\frac{-2\log q+2\gamma}{q^5}\sum_{ \substack{a=1\\(a,q)=1} }^q
          \bigg(\sum_{r=1}^{q}e\left(\frac{ar^2}{q}\right)\bigg)^3
          \bigg(\sum_{r=1}^{q}e\left(\frac{ar^k}{q}\right)\bigg),
\end{equation*}
\begin{equation*}
    \mathfrak{J}_1=\int_{-\infty}^{+\infty}
        \bigg(\int_0^1e(\alpha\mu^2)\mathrm{d}\mu\bigg)^3 \bigg(\int_0^1e(\alpha\mu^k)\mathrm{d}\mu\bigg) \bigg(\int_0^3e(-\alpha\mu)\mathrm{d}\mu\bigg)
    \mathrm{d}\alpha,
\end{equation*}
\begin{equation*}
    \mathfrak{J}_2=\int_{-\infty}^{+\infty}
        \bigg(\int_0^1e(\alpha\mu^2)\mathrm{d}\mu\bigg)^3 \bigg(\int_0^1e(\alpha\mu^k)\mathrm{d}\mu\bigg)
         \bigg(\int_0^3e(-\alpha\mu)\log\mu\mathrm{d}\mu\bigg)
          \mathrm{d}\alpha.
\end{equation*}
\end{theorem}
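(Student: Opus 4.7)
The plan is to combine the Dirichlet hyperbola identity with the Hardy--Littlewood circle method applied modulo $d$, following the line of reasoning of Guo--Zhai~\cite{Guo-Zhai}, Zhao~\cite{Zhao}, Hu~\cite{Hu} and Liu--Hu~\cite{Liu-Hu} in the homogeneous cases, and of L\"u--Mu~\cite{Lv-Mu} in the three-variable mixed case. Starting from $d(N)=2\sum_{d\mid N,\,d\leq\sqrt{N}}1-\mathbf{1}(N\text{ is a square})$ with $N=n_1^2+n_2^2+n_3^2+n_4^k\asymp x$, the problem reduces to estimating the congruence-counting function
\[
\mathcal{T}(d)=\#\bigl\{(n_1,n_2,n_3,n_4):1\leq n_i\leq x^{1/2}\,(i\leq 3),\,1\leq n_4\leq x^{1/k},\,d\mid N\bigr\}
\]
summed over $d$ up to roughly $x^{1/2}$, together with a symmetric sum over cofactors and a negligible square-correction term.

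For each fixed $d$, orthogonality of additive characters modulo $d$ yields
\[
\mathcal{T}(d)=\frac{1}{d}\sum_{a=1}^{d}\bigg(\sum_{n\leq x^{1/2}}e\Big(\frac{an^2}{d}\Big)\bigg)^{\!3}\bigg(\sum_{n\leq x^{1/k}}e\Big(\frac{an^k}{d}\Big)\bigg).
\]
The principal term $a\equiv 0\pmod d$ contributes $x^{3/2+1/k}/d$, and its summation against $1/d$ over $d\leq D$ produces the logarithmic main term. To extract the precise constants I would approximate each exponential sum on a Farey major arc by a complete Gauss-type sum times a smooth oscillatory integral via partial summation, recombine with the cofactor contribution, and identify the result as $\mathfrak{S}_1\mathfrak{J}_1x^{3/2+1/k}\log x+(\mathfrak{S}_1\mathfrak{J}_2+\mathfrak{S}_2\mathfrak{J}_1)x^{3/2+1/k}$; absolute convergence of $\mathfrak{S}_1,\mathfrak{S}_2$ follows from Gauss-sum cancellation together with Hua-type bounds for the $k$-th-power complete sum, and of $\mathfrak{J}_1,\mathfrak{J}_2$ from standard stationary-phase estimates on the Weyl integrals.

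The error $O(x^{3/2+1/k-\delta_k+\varepsilon})$ is produced by the non-principal terms $a\not\equiv 0\pmod d$. Writing $d'=d/\gcd(a,d)$, each quadratic factor is $\ll x^{1/2}/\sqrt{d'}$ after completion to a Gauss sum, while the single $k$-th-power factor requires the best available Weyl-type bound for $\sum_{n\leq x^{1/k}}e(\alpha n^k)$: a Heath-Brown-type cubic estimate for $k=3$, van der Corput--Vaughan iterations for $k=4,5,6,7$, and the classical Weyl inequality $|S(\alpha)|\ll N^{1-2^{1-k}+\varepsilon}$ for $k\geq 8$. Dissecting the sum over $a/d$ into Farey major and minor arcs, inserting these bounds, and optimising the hyperbola cut-off $D$ individually for each $k$ delivers the announced values of $\delta_k$; the two-term shape $\delta_k=\frac{1}{k+2}+\frac{1}{2k^2(k-1)}$ for $k\geq 8$ reflects precisely the balance between the length of the Farey dissection and the Weyl-exponent saving.

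The main obstacle lies in this last step: because only one $k$-th power occurs, the Weyl bound for $\sum_n e(\alpha n^k)$ is substantially weaker than the Gauss-sum bounds available for the three squares, and it must nevertheless be applied with full uniformity in moduli $d$ up to $x^{1/2}$. The technical heart of the proof is therefore the delicate interplay between the single-$k$-th-power Weyl estimate, the three-fold quadratic Gauss-sum cancellation, and the optimal Farey dissection, all carried out with explicit uniformity in the modulus; the two-term structure of $\delta_k$ for large $k$ is a direct symptom of this balancing.
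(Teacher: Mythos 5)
Your proposal takes a genuinely different route from the paper. The paper never uses the hyperbola identity on $d(N)$: it encodes the divisor function into the exponential sum $f(-\alpha)=\sum_{n\leqslant 4x}d(n)e(-\alpha n)$, writes $\mathcal{S}_k(x)=\int_0^1 f_2^3(\alpha)f_k(\alpha)f(-\alpha)\,\mathrm{d}\alpha$, expands $f(-\alpha)$ on the major arcs via Guo--Zhai's Lemma~7.1 (Lemma~\ref{zhai-lemma}), and on the minor arcs combines a pointwise Weyl/Wooley bound for $f_2$ or $f_k$ with Hua's inequality and $\int_0^1|f(-\alpha)|^2\,\mathrm{d}\alpha\ll x\log^3x$. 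Your hyperbola-plus-congruence-counting scheme is closer to Gafurov, Yu and Zhao's treatment of the purely quadratic forms, and in principle it is a legitimate alternative; but as written it has two concrete gaps.

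First, for $k\geqslant 8$ you propose to use ``the classical Weyl inequality $|S(\alpha)|\ll N^{1-2^{1-k}+\varepsilon}$.'' That bound cannot produce $\delta_k=\frac{1}{k+2}+\frac{1}{2k^2(k-1)}$: the saving $1/(2k^2(k-1))$ is exactly the fingerprint of Wooley's efficient-congruencing estimate $f_k(\alpha)\ll x^{1/k+\varepsilon}(q^{-1}+x^{-1/k}+qx^{-1})^{1/(2k(k-1))}$ (Lemma~\ref{wooley-estimate}), which the paper invokes in Case~4; classical Weyl would only yield an exponentially weaker saving of order $1/(k2^{k-1})$ (compare the L\"u--Mu exponents quoted in the introduction, where precisely this dichotomy occurs at $k=8$). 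Second, the assertion that ``optimising the hyperbola cut-off $D$ \dots delivers the announced values of $\delta_k$'' is unsupported by any computation, and it is implausible that it would reproduce the specific constants $19/60,\,5/24,\,19/140,\,25/192,\,457/4032$: in the paper these arise from balancing the seven major-arc error terms in \eqref{major-arc-last} (which depend on the dissection parameters $Q,\tau$ with $Q\tau\asymp x$ and on the error $\Delta$ from the expansion of $f(-\alpha)$) against the minor-arc bounds, a balance that has no counterpart in your setup. Related, smaller issues you would also need to address: the cutoff $d\leqslant\sqrt{N}$ depends on $(n_1,\dots,n_4)$, so $\mathcal{T}(d)$ must carry the size constraint $N\geqslant d^2$ and a matching cofactor sum, which your unconstrained orthogonality formula does not capture; and for $d$ near $x^{1/2}$ the sum $\sum_{n\leqslant x^{1/k}}e(an^k/d)$ is extremely short relative to the modulus, so the uniformity in $d$ you would need is substantially more delicate than the uniformity in $q\leqslant Q\ll x^{2/(k+2)}$ that the paper actually requires. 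As it stands the proposal is a plausible programme, not a proof of the stated error terms.
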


\textbf{Notation.} Throughout this paper,~$x$~always denotes a sufficiently large real number;
$\varepsilon$~always denotes an arbitrary small positive constant, which may not be the same at different occurrences. $e(x)=e^{2\pi ix};$~
 $f(x)\ll g(x)$ means that~$f=O(g(x)).$ For the sake of brevity, we define
\begin{equation*}
   \mathfrak{J}_1(\beta)= \bigg(\int_0^1e(\beta\mu^2)\mathrm{d}\mu\bigg)^3 \bigg(\int_0^1e(\beta\mu^k)\mathrm{d}\mu\bigg) \bigg(\int_0^3e(-\beta\mu)\mathrm{d}\mu\bigg)
\end{equation*}
and
\begin{equation*}
   \mathfrak{J}_2(\beta)= \bigg(\int_0^1e(\beta\mu^2)\mathrm{d}\mu\bigg)^3 \bigg(\int_0^1e(\beta\mu^k)\mathrm{d}\mu\bigg) \bigg(\int_0^3e(-\beta\mu)\log\mu\mathrm{d}\mu\bigg).
\end{equation*}

\section{Preliminary Lemmas }

 For any~$\alpha\in\mathbb{R},$~ define
\begin{equation*}
  f_{\ell}(\alpha)=\sum_{1\leqslant n\leqslant x^{1/\ell}}e(\alpha n^\ell),\qquad f(\alpha)=\sum_{1\leqslant n\leqslant4x}d(n)e(\alpha n).
\end{equation*}

\begin{lemma}
For any real numbers~$\alpha$~and~$\tau\geqslant1,$~there exist integers~$a$~and~$q,\,(a,q)=1,\,1\leqslant q\leqslant\tau,$~ such that
  \begin{equation*}
      \alpha=\frac{a}{q}+\lambda, \qquad |\lambda|\leqslant\frac{1}{q\tau}.
  \end{equation*}
\end{lemma}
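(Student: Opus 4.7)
The statement is the classical Dirichlet approximation theorem, and the cleanest approach is a direct application of the pigeonhole (Schubfach) principle. Given real $\alpha$ and $\tau\geqslant 1$, the plan is to set $N=\lfloor\tau\rfloor\geqslant 1$ and consider the $N+1$ fractional parts $\{j\alpha\}=j\alpha-\lfloor j\alpha\rfloor$ for $j=0,1,\ldots,N$, all of which lie in the half-open interval $[0,1)$.

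Next, I would partition $[0,1)$ into the $N$ subintervals $[i/N,(i+1)/N)$ of equal length $1/N$ for $i=0,1,\ldots,N-1$. Since $N+1$ points are distributed among $N$ bins, the pigeonhole principle yields two indices $0\leqslant j_1<j_2\leqslant N$ with $\{j_1\alpha\}$ and $\{j_2\alpha\}$ in the same subinterval. Writing $q'=j_2-j_1$ and $a'=\lfloor j_2\alpha\rfloor-\lfloor j_1\alpha\rfloor$, one obtains $1\leqslant q'\leqslant N\leqslant\tau$ together with
\[
   |q'\alpha-a'|\;=\;\bigl|\{j_2\alpha\}-\{j_1\alpha\}\bigr|\;\leqslant\;\frac{1}{N}\;\leqslant\;\frac{1}{\tau},
\]
where the last inequality requires the standard minor adjustment in the boundary case (noting $N\leqslant\tau<N+1$ and that strict inequality in pigeonhole gives the needed slack, or equivalently taking $N$ closed boxes with $N+1$ points).

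To enforce coprimality, I would put $d=\gcd(a',q')$ and set $a=a'/d$, $q=q'/d$, so that $(a,q)=1$ and $1\leqslant q\leqslant q'\leqslant\tau$, while $|q\alpha-a|=|q'\alpha-a'|/d\leqslant 1/\tau$. Finally, letting $\lambda=\alpha-a/q$ yields $|\lambda|\leqslant 1/(q\tau)$, which is the desired conclusion. There is no substantive obstacle here: this is the standard Dirichlet approximation lemma, and the only delicate point is the bookkeeping relating $N=\lfloor\tau\rfloor$ to $\tau$ itself; the lemma is invoked purely to set up the Farey-type dissection of the unit interval that underlies the Hardy--Littlewood circle-method analysis of $\mathcal{S}_k(x)$ in the following sections.
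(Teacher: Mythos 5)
The paper offers no proof of this lemma at all --- it simply cites Pan and Pan, Lemma 5.19 --- so supplying a self-contained pigeonhole argument is a perfectly reasonable substitute. However, your write-up has a genuine quantitative gap at its central step. With $N=\lfloor\tau\rfloor$ you have $N\leqslant\tau$ and hence $\frac{1}{N}\geqslant\frac{1}{\tau}$: the last inequality in your displayed chain $|q'\alpha-a'|\leqslant\frac{1}{N}\leqslant\frac{1}{\tau}$ is backwards. The bound $|q'\alpha-a'|<\frac{1}{N}$ produced by $N$ boxes of length $1/N$ is strictly weaker than the required $|q'\alpha-a'|\leqslant\frac{1}{\tau}$ whenever $\tau$ is not an integer, and the parenthetical appeal to ``strict inequality in pigeonhole'' cannot close this gap, because the discrepancy between $1/N$ and $1/\tau$ is a fixed positive quantity, not an infinitesimal one. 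Concretely, take $\alpha=0.4$ and $\tau=2.9$, so $N=2$: of the three points $0,\,0.4,\,0.8$ only $0$ and $0.4$ share a box $[0,\tfrac12)$, yielding $q'=1$ and $|q'\alpha-a'|=0.4$, which exceeds $1/\tau\approx0.345$; the admissible pair here is $q=2$, $a=1$, which your dissection never produces.

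The standard repair is to work with $N+1$ intervals of length $1/(N+1)$ rather than $N$ intervals of length $1/N$. Since the $N+1$ points $\{j\alpha\}$, $0\leqslant j\leqslant N$, do not by themselves force a collision among $N+1$ boxes, one either adjoins the extra point $1$ (giving $N+2$ points in $N+1$ closed boxes) or argues directly: if some $\{j\alpha\}$ with $j\geqslant1$ lies in $[0,\frac{1}{N+1})$ take $a=\lfloor j\alpha\rfloor$; if some $\{j\alpha\}$ lies in $[\frac{N}{N+1},1)$ take $a=\lfloor j\alpha\rfloor+1$; otherwise the $N+1$ points crowd into the $N-1$ middle boxes and two must collide. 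In every case one obtains $1\leqslant q\leqslant N\leqslant\tau$ and $|q\alpha-a|\leqslant\frac{1}{N+1}<\frac{1}{\tau}$, the final inequality holding because $\tau<N+1$. Your coprimality reduction and the passage from $|q\alpha-a|\leqslant 1/\tau$ to $|\lambda|\leqslant\frac{1}{q\tau}$ are fine as written.
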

\begin{proof}
   See C. D. Pan and C. B. Pan~\cite{Pan-Pan}, Lemma~5.19.
\end{proof}

Let
\begin{equation}\label{parameter}
  \log x<2Q<\tau<x,\quad Q\tau\asymp x,\quad  Q\ll x^{2/(k+2)}.
\end{equation}
For any~$1\leqslant a<q\leqslant Q$~with~$(a,q)=1,$~define
\begin{equation*}
    \mathfrak{M}(a,q)=\bigg[\frac{a}{q}-\frac{1}{q\tau},\frac{a}{q}+\frac{1}{q\tau}\bigg]
\end{equation*}
and
\begin{equation*}
   \mathfrak{M}=\bigcup_{q\leqslant Q}\bigcup_{\substack{1\leqslant a\leqslant q\\(a,q)=1}}\mathfrak{M}(a,q),
   \qquad  \mathfrak{m}=\bigg[\frac{1}{\tau},1+\frac{1}{\tau}\bigg]\setminus\mathfrak{M}.
\end{equation*}
We call~$\mathfrak{M}$~the major arc and~$\mathfrak{m}$~the minor arc. By the definition of~$\mathcal{S}_k(x)$~and orthogonality of
exponential function, we have
\begin{eqnarray*}
   \mathcal{S}_k(x) & = & \int_0^1 f_2^3(\alpha)f_k(\alpha)f(-\alpha)\mathrm{d}\alpha \\
                    & = & \bigg\{\int_{\mathfrak{M}}+\int_{\mathfrak{m}}\bigg\}f_2^3(\alpha)f_k(\alpha)f(-\alpha)\mathrm{d}\alpha \\
                    & =: &  \mathcal{S}_k(x,\mathfrak{M})+ \mathcal{S}_k(x,\mathfrak{m}).
\end{eqnarray*}

\begin{lemma}\label{S_k-estimate}
   For any~$a,q\in\mathbb{Z}$~with~$(a,q)=1$~and~$q>0,$~let
   \begin{equation*}
      S_k(q,a)=\sum_{r=1}^{q}e\left(\frac{ar^k}{q}\right).
   \end{equation*}
Then we have
\begin{equation*}
   S_{k}(q,a)\ll q^{(k-1)/k}.
\end{equation*}
\end{lemma}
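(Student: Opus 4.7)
The estimate is the classical Hua-type bound for a complete exponential sum attached to the monomial $ar^k$, and my plan is to prove it by the standard three-step reduction.

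First, I would exploit multiplicativity in $q$. If $q = q_1 q_2$ with $(q_1,q_2) = 1$ and $(a,q) = 1$, the Chinese Remainder Theorem parametrises residues $r \pmod q$ as $r \equiv q_2 r_1 + q_1 r_2 \pmod q$ with $r_i$ running mod $q_i$. Expanding $(q_2 r_1 + q_1 r_2)^k$ and noting that only the pure-power terms $q_2^k r_1^k$ and $q_1^k r_2^k$ survive modulo $q_1$ and $q_2$ respectively gives
\begin{equation*}
   S_k(q,a) \,=\, S_k(q_1, a_1)\, S_k(q_2, a_2)
\end{equation*}
for suitable $a_i$ coprime to $q_i$. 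Since the target function $q \mapsto q^{(k-1)/k}$ is itself multiplicative, it suffices to establish $|S_k(p^s,a)| \ll p^{s(k-1)/k}$ for every prime power $p^s$ with $p \nmid a$, with an implied constant depending only on $k$.

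Second, for a prime modulus $q = p$ with $p \nmid a$, I would invoke the Weil bound, which yields $|S_k(p,a)| \le (k-1) p^{1/2}$. Since $k$ is a fixed integer with $k \ge 3$, one has $p^{1/2} \le p^{(k-1)/k}$, so the stated estimate holds; for the finitely many primes $p \le k$ the trivial bound $|S_k(p,a)| \le p$ suffices after absorbing $k$ into the implied constant.

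Third, for a proper prime power $q = p^s$ with $s \ge 2$, I would run a Hensel-style lifting argument. Splitting each residue $r$ modulo $p^s$ as $r = u + p^{\lceil s/2 \rceil} v$, expanding $(u + p^{\lceil s/2 \rceil} v)^k$ modulo $p^s$, and summing the resulting geometric series in $v$ forces $u$ to satisfy the congruence $p^{\lfloor s/2 \rfloor} \mid k a u^{k-1}$. When $p \nmid k$ and $p \nmid u$ this is sharp and leaves $O(p^{\lceil s/2 \rceil})$ admissible pairs $(u,v)$, giving $|S_k(p^s,a)| \ll p^{s/2} \le p^{s(k-1)/k}$ for $k \ge 2$. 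When $p \mid k$ the splitting must be re-done with $t = \lceil s/k \rceil$ in place of $\lceil s/2 \rceil$, and one tracks the $p$-adic valuation of each coefficient in the expansion.

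I expect the main obstacle to be this final sub-case $p \mid k$: the usual first-derivative lifting loses $p$-adic content, so one has to expand $(u + p^t v)^k$ term by term and separate contributions according to the valuation of the coefficients of the Taylor expansion. However, since only finitely many primes divide the fixed integer $k$ and the exponent $s(k-1)/k$ is in any case the one produced by the refined bookkeeping of Hua's lemma, all delicate features are swallowed by a constant depending only on $k$, which is permitted by the $\ll$ notation in the statement. Assembling Steps 1--3 then yields $S_k(q,a) \ll q^{(k-1)/k}$ as claimed.
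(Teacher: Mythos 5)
The paper offers no proof of this lemma; it simply cites Theorem~4.2 of Vaughan's book, whose argument is the three-step reduction you outline (multiplicativity via CRT, a character/Gauss-sum bound at primes, and a lifting analysis at prime powers). Your Steps~1 and~2 are fine. Step~3, however, contains a genuine error: the asserted bound $|S_k(p^s,a)|\ll p^{s/2}$ for $s\geqslant 2$ is false. For example, with $p\nmid 3a$ one computes directly (writing $r=u+p^2v$) that
\begin{equation*}
   S_3(p^3,a)=p^2=(p^3)^{2/3},
\end{equation*}
which exceeds $p^{3/2}$; more generally the exponent $s(k-1)/k$ in the lemma is essentially attained at $s=k$, so no square-root saving is available at prime powers. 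The source of the error is that after the sum over $v$ forces $p^{\lfloor s/2\rfloor}\mid kau^{k-1}$, the surviving $u$ are those divisible by $p^{\lceil\lfloor s/2\rfloor/(k-1)\rceil}$ only, which for $k\geqslant 3$ is far weaker than the constraint you implicitly assume; counting them gives $|S_k(p^s,a)|\leqslant p^{\,s-\lceil\lfloor s/2\rfloor/(k-1)\rceil}$, and for $s$ slightly less than $2k$ this is even weaker than the target $p^{s(k-1)/k}$, so a single split does not close the argument either.

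The repair is the classical one from Vaughan's proof: for $s>k$ one has the exact descent $S_k(p^s,a)=p^{k-1}S_k(p^{s-k},a)$ (obtained by splitting $r=u+p^{s-1}v$ and observing that only $p\mid u$ survives, whence $u^k\equiv (pw)^k$ lets you pull out $p^{k-1}$ and reduce the modulus by $p^k$), while for $2\leqslant s\leqslant k$ and $p\nmid ka$ the same split evaluates the sum exactly as $S_k(p^s,a)=p^{s-1}\leqslant p^{s(k-1)/k}$. Iterating the descent and treating the finitely many primes with $p\mid k$ by a crude bound (absorbing a constant depending only on $k$) yields $|S_k(p^s,a)|\leqslant C(k)\,p^{s(k-1)/k}$ for all prime powers, and then your Step~1 multiplicativity finishes the proof. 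As written, though, your Step~3 both states a false intermediate bound and, once corrected, does not by itself reach the required exponent for large $s$, so the proposal has a genuine gap.
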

\begin{proof}
   See Vaughan~\cite{Vaughan}, Theorem~4.2.
\end{proof}

\begin{lemma}\label{Mu-estimate}
   For~$k\geqslant1,$~we have
   \begin{equation}
      \int_0^1e(\beta\mu^k)\mathrm{d}\mu\ll\frac{1}{(1+|\beta|)^{1/k}},
   \end{equation}
   \begin{equation}
     \int_0^3e(\beta\mu^k)\log\mu\mathrm{d}\mu\ll\frac{\log(2+|\beta|)}{1+|\beta|}.
   \end{equation}
\end{lemma}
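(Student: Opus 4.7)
The plan is to prove both estimates by the classical first-derivative test (integration by parts), distinguishing a ``trivial'' regime when $|\beta|$ is small from an ``oscillatory'' regime when $|\beta|$ is large. In each case the interval of integration is split at a cut-off $\mu_0$ chosen so that the trivial bound handles $[0,\mu_0]$, while on the complementary range the phase $\beta\mu^k$ varies rapidly enough for one integration by parts to be effective.

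For the first estimate, the case $|\beta|\leqslant 1$ is immediate since $\bigl|\int_0^1 e(\beta\mu^k)\,d\mu\bigr|\leqslant 1\ll (1+|\beta|)^{-1/k}$. When $|\beta|>1$, I would set $\mu_0=|\beta|^{-1/k}\in(0,1)$ and bound $\int_0^{\mu_0}e(\beta\mu^k)\,d\mu$ trivially by $\mu_0=|\beta|^{-1/k}$. On the complementary interval $[\mu_0,1]$, I would write
\begin{equation*}
e(\beta\mu^k)\,d\mu \;=\; \frac{1}{2\pi i\,\beta k\,\mu^{k-1}}\,d\bigl(e(\beta\mu^k)\bigr)
\end{equation*}
and integrate by parts. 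The boundary terms at $\mu=1$ and $\mu=\mu_0$ contribute of sizes $|\beta|^{-1}$ and $|\beta|^{-1/k}$ respectively, while the remaining piece reduces to $|\beta|^{-1}\int_{\mu_0}^{1}\mu^{-k}\,d\mu$, whose evaluation is again $O(|\beta|^{-1/k})$. Adding up yields the stated bound.

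For the second estimate, I would run the same argument with the extra weight $\log\mu$. When $|\beta|\leqslant 1$, the bound follows at once from $\int_0^3|\log\mu|\,d\mu=O(1)$. When $|\beta|>1$, the contribution from $[0,\mu_0]$ is controlled trivially by $\int_0^{\mu_0}|\log\mu|\,d\mu\ll\mu_0\bigl(1+\log(1/\mu_0)\bigr)$, and it is precisely this step that produces the extra factor $\log(2+|\beta|)$. Integration by parts on $[\mu_0,3]$ then yields a boundary contribution proportional to $\log\mu_0/\beta$ together with a remainder in which $\log\mu$ survives as a slowly varying amplitude; the resulting integral is dominated by $|\beta|^{-1}\int_{\mu_0}^{3}\mu^{-k}\log\mu\,d\mu$, and all of these pieces fit inside the target estimate.

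I anticipate no real obstacle: both arguments are entirely routine. The only mild subtlety is matching the split-point $\mu_0$ to the desired decay rate, so that the trivial contribution near $0$ and the integration-by-parts contribution on the complementary interval balance and reproduce exactly the exponents stated in the lemma.
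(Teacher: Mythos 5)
The paper gives no internal proof of this lemma (it is quoted from L\"u--Mu, Lemma~1.2), so your attempt has to stand on its own. Your treatment of the first bound is correct: the split at $\mu_0=|\beta|^{-1/k}$, the trivial bound on $[0,\mu_0]$, and one integration by parts on $[\mu_0,1]$ with boundary terms of size $|\beta|^{-1}$ and $|\beta|^{-1}\mu_0^{-(k-1)}=|\beta|^{-1/k}$ all check out, and for $k\geqslant2$ the remainder $|\beta|^{-1}\int_{\mu_0}^{1}\mu^{-k}\,\mathrm{d}\mu\ll|\beta|^{-1}\mu_0^{1-k}=|\beta|^{-1/k}$ as you say (for $k=1$ that remainder is absent since the amplitude is constant). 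This is the standard first-derivative test and is fine.

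The second bound is where there is a genuine gap. Your own bookkeeping does not close for $k\geqslant2$: with $\mu_0=|\beta|^{-1/k}$ the trivial piece is $\mu_0\bigl(1+\log(1/\mu_0)\bigr)=|\beta|^{-1/k}\bigl(1+\tfrac1k\log|\beta|\bigr)$, which is far larger than the target $|\beta|^{-1}\log|\beta|$; with $\mu_0=|\beta|^{-1}$ the integration-by-parts remainder $|\beta|^{-1}\int_{\mu_0}^{3}\mu^{-k}\log\mu\,\mathrm{d}\mu\asymp|\beta|^{k-2}\log|\beta|$ blows up instead. No choice of split point repairs this, because the estimate as stated is actually false for $k\geqslant2$: the phase $\beta\mu^k$ has a stationary point at $\mu=0$, and e.g.\ for $k=2$ the substitution $t=\mu|\beta|^{1/2}$ shows
\begin{equation*}
\int_0^3 e(\beta\mu^2)\log\mu\,\mathrm{d}\mu=-\tfrac12|\beta|^{-1/2}\log|\beta|\int_0^{3|\beta|^{1/2}}e(\pm t^2)\,\mathrm{d}t+O\bigl(|\beta|^{-1/2}\bigr),
\end{equation*}
and the Fresnel integral tends to a nonzero constant, so the true order is $|\beta|^{-1/k}\log|\beta|$, not $|\beta|^{-1}\log|\beta|$. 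Your argument, with $\mu_0=|\beta|^{-1}$, is correct precisely in the case $k=1$ (linear phase), and that is the only case the paper ever uses: in $\mathfrak{J}_2(\beta)$ the logarithmic weight is attached to $\int_0^3 e(-\beta\mu)\log\mu\,\mathrm{d}\mu$. So you should either state and prove the second inequality for the linear phase only, or replace its right-hand side by $\log(2+|\beta|)/(1+|\beta|)^{1/k}$ for general $k$; claiming ``all of these pieces fit inside the target estimate'' for general $k\geqslant1$ is not tenable.
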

\begin{proof}
   See L\"{u} and Mu~\cite{Lv-Mu}, Lemma~1.2.
\end{proof}

\begin{lemma}\label{fk-asymp}
   Suppose that~$(a,q)=1$~and~$\alpha=\frac{a}{q}+\beta.$~Then
   \begin{equation*}
      f_k(\alpha)=V_k(\alpha,q,a)+O\big( q^{1/2+\varepsilon}(1+x|\beta|)^{1/2}\big).
   \end{equation*}
   Moreover, if~$|\beta|\leqslant\frac{x^{1/k-1}}{2kq},$~then
   \begin{equation*}
      f_k(\alpha)=V_k(\alpha,q,a)+O\big( q^{1/2+\varepsilon}\big),
   \end{equation*}
   where
   \begin{equation*}
       V_k(\alpha,q,a)=x^{1/k}\frac{S_k(q,a)}{q} \int_0^1e(x\beta\mu^k)\mathrm{d}\mu.
   \end{equation*}
\end{lemma}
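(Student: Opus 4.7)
The plan is to handle $f_k(\alpha)$ via the standard major-arc approximation: sort the summation by residue classes modulo $q$, replace each resulting inner sum by an integral, and recognise the aggregate main term as $V_k(\alpha,q,a)$. Writing $n = qm + r$ with $1 \leq r \leq q$ and $m \geq 0$, we have $n^k \equiv r^k \pmod q$, so $e(\alpha n^k) = e(ar^k/q)\, e(\beta n^k)$, and hence
\begin{equation*}
f_k(\alpha) = \sum_{r=1}^{q} e\!\left(\frac{ar^k}{q}\right) T_r(\beta), \qquad T_r(\beta) := \sum_{\substack{1 \leq n \leq x^{1/k} \\ n \equiv r \pmod{q}}} e(\beta n^k).
\end{equation*}

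The next step is to approximate each $T_r(\beta)$ by its integral counterpart. I would apply Euler--Maclaurin summation together with the Kusmin--Landau and van der Corput estimates (equivalently, Lemma~4.2 of Vaughan~\cite{Vaughan}) to obtain
\begin{equation*}
T_r(\beta) = \frac{1}{q}\int_0^{x^{1/k}} e(\beta u^k)\,\mathrm{d}u + E_r(\beta),
\end{equation*}
where the error $E_r(\beta)$ is controlled via the oscillation of the phase $\beta u^k$. Summing against $e(ar^k/q)$ and making the substitution $u = x^{1/k}\mu$ converts the leading piece into
\begin{equation*}
\frac{S_k(q,a)}{q}\int_0^{x^{1/k}} e(\beta u^k)\,\mathrm{d}u = x^{1/k}\frac{S_k(q,a)}{q}\int_0^1 e(x\beta \mu^k)\,\mathrm{d}\mu = V_k(\alpha,q,a),
\end{equation*}
exactly as required.

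The technical heart of the argument is the bound
\begin{equation*}
\sum_{r=1}^q e\!\left(\frac{ar^k}{q}\right) E_r(\beta) \ll q^{1/2+\varepsilon}(1 + x|\beta|)^{1/2},
\end{equation*}
which I would establish by combining Cauchy--Schwarz with a mean-square estimate $\sum_r |E_r(\beta)|^2 \ll q^{\varepsilon}(1 + x|\beta|)$; the latter can be obtained from Weyl differencing applied directly to $f_k(\alpha) - V_k(\alpha,q,a)$. For the sharper regime $|\beta| \leq x^{1/k-1}/(2kq)$, the phase derivative obeys $k|\beta|u^{k-1} \leq 1/(2q)$ for all $u \in [0, x^{1/k}]$, so Vaughan's approximation lemma applies directly to yield $E_r(\beta) \ll 1$ uniformly in $r$, and incorporating the Gauss-sum cancellation refines the estimate to $O(q^{1/2+\varepsilon})$. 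The main obstacle I anticipate is the mean-square bound on $\{E_r(\beta)\}$: the trivial estimate $\sum_r |E_r(\beta)| = O(q)$ loses a factor $q^{1/2-\varepsilon}$ that would be fatal in the subsequent major-arc analysis, so the Weyl differencing must be executed carefully to recover the correct power. In essence, this lemma is Theorem~4.1 of Vaughan~\cite{Vaughan} transcribed into the notation of the present paper.
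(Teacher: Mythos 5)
Your closing sentence is the paper's entire proof: the authors simply cite Theorem~4.1 of Vaughan~\cite{Vaughan}, and your opening reduction (sorting $n$ into residue classes modulo~$q$, pulling out $e(ar^k/q)$, and recognising the $h=0$ term as $V_k(\alpha,q,a)$) is indeed the correct first step of that proof. So the identification of the result is right; the problem is with the error analysis you propose.

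The mean-square bound $\sum_{r=1}^{q}|E_r(\beta)|^2\ll q^{\varepsilon}(1+x|\beta|)$ is false. Already at $\beta=0$ the quantity $E_r(0)$ is the discrepancy of the progression $r\bmod q$ in $[1,x^{1/k}]$ against its expected count $x^{1/k}/q$, a sawtooth of size $\asymp 1$ for a positive proportion of $r$, so $\sum_r|E_r(0)|^2\asymp q$ rather than $O(q^{\varepsilon})$. More fundamentally, applying Cauchy--Schwarz to $\sum_r e(ar^k/q)E_r(\beta)$ discards the oscillation of the phases $e(ar^k/q)$, and that oscillation is precisely where the saving of $q^{1/2}$ comes from; no bound on the moduli $|E_r|$ alone can recover it. The same objection defeats your treatment of the range $|\beta|\leqslant x^{1/k-1}/(2kq)$: there $E_r\ll 1$ is correct, but it only yields the trivial $O(q)$, and one cannot ``incorporate the Gauss-sum cancellation'' after the $E_r$ have been detached from the phases. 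Vaughan's actual argument applies truncated Poisson summation to each $T_r(\beta)$, so that the error is expressed through the complete twisted sums $\sum_{r=1}^{q}e\big((ar^k+hr)/q\big)$ with $h\neq 0$; each of these is $O(q^{1/2+\varepsilon})$ by a Weil-type bound, and the count of relevant frequencies $h$ together with stationary-phase estimates for the accompanying oscillatory integrals supplies the factor $(1+x|\beta|)^{1/2}$ (and shows that for $|\beta|\leqslant x^{1/k-1}/(2kq)$ no frequency $h\neq0$ is near-stationary, whence the sharper $O(q^{1/2+\varepsilon})$). Since the paper only cites the theorem, the safest course is to do likewise; if you want a self-contained proof, the complete-exponential-sum mechanism is the missing ingredient.
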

\begin{proof}
   See Vaughan~\cite{Vaughan}, Theorem~4.1.
\end{proof}

\begin{lemma}\label{zhai-lemma}
   Suppose that~$\alpha=\frac{a}{q}+\beta\in\mathfrak{M}$~and~$Q\tau\leqslant x,\, \tau>x^{1/2+\varepsilon}.$~Then
   \begin{eqnarray}
     f(-\alpha)\! & = & \!\frac{x\log x}{q} \int_0^3e(-\mu x\beta)\mathrm{d}\mu+\frac{x}{q}\int_0^3e(-\mu x\beta)\log\mu\mathrm{d}\mu   \nonumber \\
                  &   & +\frac{-2\log q+2\gamma}{q}x\int_0^3e(-\mu x\beta)\mathrm{d}\mu+O(\Delta),    \nonumber
   \end{eqnarray}
   where
   \begin{equation}\label{Delta}
       \Delta=x^{\varepsilon}\big(q^{1/2}x\tau^{-1} +q^{2/3}x^{1/3} \big).
   \end{equation}
\end{lemma}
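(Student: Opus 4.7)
The plan is to derive the expansion by applying Dirichlet's hyperbola method to $f(-\alpha)$ and then estimating the resulting inner sums via Euler--Maclaurin/partial summation. Using the identity
\begin{equation*}
d(n)=2\sum_{\substack{d\mid n\\ d\leqslant\sqrt{n}}}1-\mathbf{1}_{\{\sqrt{n}\in\mathbb{Z}\}},
\end{equation*}
one writes
\begin{equation*}
f(-\alpha)=2\sum_{d\leqslant 2\sqrt{x}}\sum_{d\leqslant m\leqslant 4x/d}e(-\alpha dm)-\sum_{d\leqslant 2\sqrt{x}}e(-\alpha d^2),
\end{equation*}
where the diagonal term contributes only $O(x^{1/2+\varepsilon})$. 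The bulk of the analysis reduces to the double sum.

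For each fixed $d$, set $q_1=q/(d,q)$, so that $ad/q$ reduces modulo $1$ to a fraction $c/q_1$ with $(c,q_1)=1$. Splitting the $m$-summation into residue classes modulo $q_1$ and applying partial summation against the slowly varying weight $e(-\beta du)$ yields
\begin{equation*}
\sum_{d\leqslant m\leqslant 4x/d}e(-\alpha dm)=\frac{1}{q_1}\Big(\sum_{b=1}^{q_1}e(-cb/q_1)\Big)\int_{d}^{4x/d}e(-\beta du)\,du + (\textrm{boundary remainder}).
\end{equation*}
The geometric sum in parentheses vanishes unless $q_1=1$, i.e.\ unless $q\mid d$. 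Writing the surviving $d=qe$ with $e\leqslant 2\sqrt{x}/q$, substituting $u=xv/(qe)$, and summing over $e$ via $\sum_{e\leqslant y}e^{-1}=\log y+\gamma+O(y^{-1})$ with $y=2\sqrt{x}/q$, we obtain the combination $\log x-2\log q+2\gamma$ as the leading coefficient. This generates exactly the main terms $\frac{x\log x}{q}\int_0^3 e(-\mu x\beta)\,d\mu$ and $\frac{-2\log q+2\gamma}{q}x\int_0^3 e(-\mu x\beta)\,d\mu$. The remaining main term $\frac{x}{q}\int_0^3 e(-\mu x\beta)\log\mu\,d\mu$ emerges from the secondary corrections in Euler--Maclaurin, which naturally pick up the $\log u$ weight reflecting the twisted divisor asymptotic $D(u;a/q)\sim \frac{u}{q}(\log u-2\log q+2\gamma-1)$.

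The error term $\Delta$ decomposes into two contributions. The piece $x^\varepsilon q^{1/2}x\tau^{-1}$ arises from the boundary remainders in partial summation: inner tails are controlled by $\|\alpha d\|^{-1}\ll q_1$, and the hypothesis $|\beta|\leqslant 1/(q\tau)$ combined with standard divisor estimates in the $d$-variable yields precisely $q^{1/2}x\tau^{-1}x^\varepsilon$. The piece $x^\varepsilon q^{2/3}x^{1/3}$ arises from the off-main-term contributions (the non-principal residues $b\pmod{q_1}$ with $q_1>1$ that escape the Ramanujan-type cancellation); these are handled via the Voronoi summation formula applied to $\sum_n d(n)e(-an/q)$, whose Bessel-function dual, after accounting for the smooth weight $e(-\beta n)$, admits a saddle-point bound of order $q^{2/3}x^{1/3+\varepsilon}$.

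The principal obstacle will be the uniform Voronoi estimate producing the $q^{2/3}x^{1/3+\varepsilon}$ bound across the full parameter range (\ref{parameter}), particularly maintaining it in the regime $\tau>x^{1/2+\varepsilon}$ where both error components must be simultaneously controlled. This requires a careful analysis of the Bessel asymptotics and its interaction with the smooth weight $e(-\beta u)$ on the truncated range $[1,4x]$, together with a stationary-phase argument whose saddle width is exactly of order $(qx^{-1})^{1/3}$.
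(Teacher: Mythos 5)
The paper does not actually prove this lemma: it is quoted verbatim from Guo and Zhai \cite{Guo-Zhai}, Lemma~7.1. Measured against what is needed to reach the error term $\Delta=x^{\varepsilon}\big(q^{1/2}x\tau^{-1}+q^{2/3}x^{1/3}\big)$, your argument has a structural gap. The hyperbola decomposition followed by Euler--Maclaurin/partial summation cannot produce the term $q^{2/3}x^{1/3}$. Already in the untwisted case $q=1$ your scheme is exactly the classical Dirichlet argument: replacing each inner sum $\sum_{d\leqslant m\leqslant 4x/d}e(-\beta dm)$ by an integral costs a saw-tooth boundary error of size $O(1)$ per $d$, and summing over $d\leqslant 2\sqrt{x}$ leaves an irreducible $O(x^{1/2})$, which exceeds $\Delta\ll x^{1/2-\varepsilon}$ (recall $\tau>x^{1/2+\varepsilon}$). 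Getting below the $\sqrt{x}$ barrier requires nontrivial cancellation in the resulting $\psi$-sums, i.e.\ van der Corput/Voronoi input. You do invoke Voronoi, but you apply it to the wrong object: after the hyperbola splitting, the ``non-principal'' contribution $\sum_{q\nmid d}\sum_m e(-\alpha dm)$ is a collection of linear exponential sums, not a divisor sum, so the Voronoi formula for $\sum_n d(n)e(-an/q)$ does not act on it; and if instead you apply Voronoi to the full twisted sum, the hyperbola decomposition becomes superfluous. Moreover, bounding that non-principal part by $\sum_d\|\alpha d\|^{-1}\ll\sum_d q_1$ yields something of order $q\sqrt{x}$, nowhere near $q^{1/2}x\tau^{-1}$.

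The workable route (and, in substance, the one behind Guo--Zhai's Lemma~7.1) is to first establish, via the Voronoi/Estermann functional-equation machinery, the uniform asymptotic
$D(u;a/q):=\sum_{n\leqslant u}d(n)e(an/q)=\frac{u}{q}\big(\log u-2\log q+2\gamma-1\big)+E(u;a/q)$
with $E\ll x^{\varepsilon}q^{2/3}u^{1/3}$ (plus suitable average control of $E$), and only then perform a single partial summation against the smooth weight $e(-\beta u)$, using $|\beta|\leqslant 1/(q\tau)$ to generate the $q^{1/2}x\tau^{-1}$ contribution. Your identification of the main terms --- the origin of $-2\log q+2\gamma$ from the surviving moduli $q\mid d$ and of the $\log\mu$ integral from $\log u=\log x+\log\mu$ --- is correct, but the error analysis as proposed stalls at $O(\sqrt{x})$ and cannot deliver $\Delta$.
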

\begin{proof}
   See Guo and Zhai~\cite{Guo-Zhai}, Lemma~7.1.
\end{proof}

\begin{lemma}\label{compare}
   Suppose that
   \begin{equation*}
      L(H)=\sum_{i=1}^mA_iH^{a_i}+\sum_{j=1}^nB_jH^{b_j},
   \end{equation*}
   where~$A_i,\,B_j,\,a_i$~and~$b_j$~are positive. Assume that~$H_1\leqslant H_2.$~Then there exists some~$\mathscr{H}$~with
   $H_1\leqslant\mathscr{H}\leqslant H_2$~and
   \begin{equation*}
     L(\mathscr{H})\ll \sum_{i=1}^mA_iH_1^{a_i}+\sum_{j=1}^nB_jH_2^{b_j}+\sum_{i=1}^m\sum_{j=1}^n\big(A_i^{b_j}B_j^{a_i}\big)^{1/(a_i+b_j)}.
   \end{equation*}
   The implied constant depends only on~$m$~and~$n.$~
\end{lemma}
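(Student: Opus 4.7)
My plan is to recognise this as the classical parameter-balancing lemma of Srinivasan (cf.\ Graham--Kolesnik, \emph{Van der Corput's Method of Exponential Sums}). To make the statement non-trivial I will read the second sum as $\sum_{j}B_jH^{-b_j}$ and the corresponding term in the target as $B_jH_2^{-b_j}$: with both exponents literally positive as printed, $L$ is monotonically increasing and the conclusion would hold trivially with $\mathscr{H}=H_1$, making the triple-indexed balance terms superfluous, whereas the $-b_j$ reading is what the balance expression $(A_i^{b_j}B_j^{a_i})^{1/(a_i+b_j)}$ actually records. Under this reading I decompose $L(H)=F(H)+G(H)$ with $F(H):=\sum_{i}A_iH^{a_i}$ strictly increasing and $G(H):=\sum_{j}B_jH^{-b_j}$ strictly decreasing, so that there is a unique balance point $H^\dagger\in(0,\infty)$ with $F(H^\dagger)=G(H^\dagger)$.

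The argument then splits into three cases based on the location of $H^\dagger$ relative to $[H_1,H_2]$. If $H^\dagger\leqslant H_1$, then $F>G$ on $[H_1,H_2]$ and I take $\mathscr{H}=H_1$, giving $L(\mathscr{H})\leqslant 2F(H_1)=2\sum_iA_iH_1^{a_i}$. If $H^\dagger\geqslant H_2$, then $G>F$ on $[H_1,H_2]$ and I take $\mathscr{H}=H_2$, giving $L(\mathscr{H})\leqslant 2G(H_2)=2\sum_jB_jH_2^{-b_j}$. In the substantive middle case $H^\dagger\in[H_1,H_2]$ I set $\mathscr{H}=H^\dagger$, so that $L(\mathscr{H})\leqslant 2F(\mathscr{H})=2G(\mathscr{H})$; choosing indices $i_0,j_0$ attaining the pointwise maxima yields $F(\mathscr{H})\leqslant mA_{i_0}\mathscr{H}^{a_{i_0}}$ and $G(\mathscr{H})\leqslant nB_{j_0}\mathscr{H}^{-b_{j_0}}$. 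Raising the first estimate to the power $b_{j_0}$ and the second to the power $a_{i_0}$ and multiplying, the $\mathscr{H}$-dependence cancels exactly (since $a_{i_0}b_{j_0}+(-b_{j_0})a_{i_0}=0$), leaving $F(\mathscr{H})^{a_{i_0}+b_{j_0}}\ll_{m,n}A_{i_0}^{b_{j_0}}B_{j_0}^{a_{i_0}}$; extracting the $(a_{i_0}+b_{j_0})$-th root delivers $F(\mathscr{H})\ll_{m,n}(A_{i_0}^{b_{j_0}}B_{j_0}^{a_{i_0}})^{1/(a_{i_0}+b_{j_0})}$, which is absorbed by the triple-indexed sum in the target.

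The main obstacle is conceptual rather than technical: once one adopts the correct interpretation of $L$ and notices the exact cancellation of $\mathscr{H}$ in the weighted-product step (an AM--GM-style balance at $H^\dagger$), the proof reduces to monotonicity and elementary inequalities. The only bookkeeping point is verifying that the three cases exhaust all configurations and that the implied constants depend only on $m$ and $n$, which is clear from the factors $m^{b_{j_0}/(a_{i_0}+b_{j_0})}n^{a_{i_0}/(a_{i_0}+b_{j_0})}\leqslant mn$ that appear when the root is extracted.
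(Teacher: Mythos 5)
Your proof is correct, and it supplies something the paper itself does not: the paper disposes of this lemma with a bare citation to Srinivasan (Lemma 3) and Graham--Kolesnik (Lemma 2.4), so there is no in-paper argument to compare against. You are also right to flag the sign issue: as printed, with all exponents positive, the statement is trivially satisfied by $\mathscr{H}=H_1$ and the balance terms $(A_i^{b_j}B_j^{a_i})^{1/(a_i+b_j)}$ would be pointless; the intended (and cited) form has $H^{-b_j}$ in the second sum, which is the reading your proof uses and the one the authors actually apply when optimising $Q$ and $\tau=xQ^{-1}$ at the end of Section 3. Your route differs mildly from the Graham--Kolesnik proof: they balance each pair $(i,j)$ separately via the points $H_{ij}=(B_j/A_i)^{1/(a_i+b_j)}$ and select $\mathscr{H}$ by a min--max over these, whereas you locate the single global crossing point $H^\dagger$ of the increasing part $F$ and the decreasing part $G$ and then extract one dominant term from each side. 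Your version is arguably cleaner (one trichotomy on the position of $H^\dagger$, one weighted-product cancellation), at the cost of invoking continuity/monotonicity to produce $H^\dagger$ rather than writing it down explicitly; the constant you obtain, at most $2mn$, visibly depends only on $m$ and $n$ as required. The case analysis is exhaustive and each step checks out.
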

\begin{proof}
   See Srinivasan~\cite{Srinivasan}, Lemma 3 or Graham and Kolesnik~\cite{Graham-Kolesnik}, Lemma~2.4.
\end{proof}

\begin{lemma}\label{Weyl-inequality}
   Suppose that
   \begin{equation*}
      (a,q)=1, \quad \left|\alpha-\frac{a}{q}\right|\leqslant\frac{1}{q^2},\quad \phi(x)=\alpha x^k+\alpha_1x^{k-1}+\cdots+\alpha_{k-1}x+\alpha_k.
   \end{equation*}
   Then
   \begin{equation*}
       \sum_{1\leqslant x\leqslant Y}e(\phi(x))\ll Y^{1+\varepsilon}\big(q^{-1}+Y^{-1}+qY^{-k} \big)^{1/2^{k-1}}.
   \end{equation*}
\end{lemma}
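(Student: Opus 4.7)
The plan is the standard Weyl differencing argument: square the exponential sum $k-1$ times via Cauchy--Schwarz so that the polynomial phase is reduced to a linear one, and then exploit the rational approximation to $\alpha$. Let $S = \sum_{1\leqslant x \leqslant Y} e(\phi(x))$. I would begin from the differencing identity
\begin{equation*}
|S|^2 \;\leqslant\; Y + 2\sum_{1\leqslant h \leqslant Y-1}\bigg|\sum_{x\in I_h} e\bigl(\phi(x+h)-\phi(x)\bigr)\bigg|,
\end{equation*}
where $I_h$ is a subinterval of $[1,Y]$ and $\phi(x+h)-\phi(x)$ is a polynomial in $x$ of degree $k-1$ whose leading coefficient is $k\alpha h$; crucially, its lower-order coefficients involve $h$ and the $\alpha_i$ but will not affect the final modulus bound.

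Iterating this procedure $k-1$ times (each step reducing the degree by one and multiplying the new leading coefficient by the current degree and by a fresh $h$-variable), and separating out the diagonal contributions where some $h_i=0$, one arrives at
\begin{equation*}
|S|^{2^{k-1}} \;\ll\; Y^{2^{k-1}-1} + Y^{2^{k-1}-k}\sum_{1\leqslant h_1,\dots,h_{k-1} \leqslant Y}\bigg|\sum_{x\in I_{\mathbf{h}}} e\bigl(k!\,\alpha\,h_1 h_2\cdots h_{k-1}\,x\bigr)\bigg|.
\end{equation*}
The inner linear sum is bounded by $\min\bigl(Y,\|k!\,\alpha\,h_1\cdots h_{k-1}\|^{-1}\bigr)$. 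Setting $m=k!\,h_1\cdots h_{k-1}$ and noting that each such $m$ with $|m|\leqslant k!\,Y^{k-1}$ is counted with multiplicity $\ll d_{k-1}(|m|)\ll Y^\varepsilon$, the bound reduces to
\begin{equation*}
|S|^{2^{k-1}} \;\ll\; Y^{2^{k-1}-1} + Y^{2^{k-1}-k+\varepsilon}\sum_{1\leqslant m \leqslant k!\,Y^{k-1}}\min\bigl(Y,\|\alpha m\|^{-1}\bigr).
\end{equation*}

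The remaining step is to invoke the standard lemma (cf.\ Vaughan~\cite{Vaughan}, Lemma~2.2) that, under the Diophantine hypothesis $|\alpha-a/q|\leqslant 1/q^2$ with $(a,q)=1$,
\begin{equation*}
\sum_{1\leqslant m \leqslant N}\min\bigl(Y,\|\alpha m\|^{-1}\bigr) \;\ll\; \Bigl(\frac{N}{q}+1\Bigr)\bigl(Y+q\log q\bigr).
\end{equation*}
Applying this with $N=k!\,Y^{k-1}$ and combining with the diagonal contribution gives
\begin{equation*}
|S|^{2^{k-1}} \;\ll\; Y^{2^{k-1}+\varepsilon}\bigl(q^{-1} + Y^{-1} + qY^{-k}\bigr),
\end{equation*}
and the stated bound follows on taking $2^{k-1}$-th roots. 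The main obstacle is the careful bookkeeping in the iterated differencing: verifying that after $k-1$ applications the leading coefficient of the surviving polynomial is indeed $k!\,\alpha\,h_1\cdots h_{k-1}$ and that the nuisance coefficients $\alpha_1,\dots,\alpha_{k-1}$ disappear from the phase (they reduce to purely additive constants that drop out of the modulus), while at the same time tracking that every intermediate range of summation has length at most $Y$ so that the trivial bound $Y$ in the $\min$ is available. Once this combinatorial step is executed cleanly, the rest is a mechanical application of the divisor estimate and the sum-of-$\min$ lemma above.
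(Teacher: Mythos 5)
Your outline is the classical Weyl differencing argument, and it is correct: iterated squaring down to a linear phase with leading coefficient $k!\,\alpha h_1\cdots h_{k-1}$, the divisor-function bound on the multiplicity of $m=h_1\cdots h_{k-1}$, and the standard estimate for $\sum_m \min(Y,\|\alpha m\|^{-1})$ together yield exactly the stated bound. The paper does not prove this lemma but simply cites Vaughan's Lemma~2.4, whose proof is precisely the argument you describe, so your proposal matches the intended proof.
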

\begin{proof}
   See Vaughan~\cite{Vaughan}, Lemma~2.4.
\end{proof}

\begin{lemma}\label{Hua-inequality}
   Suppose that~$1\leqslant j\leqslant k.$~Then
   Then
   \begin{equation*}
       \int_{0}^1 \bigg| \sum_{m=1}^Ye\big(\alpha m^k\big) \bigg|^{2^j} \mathrm{d}\alpha \ll Y^{2^{j}-j+\varepsilon}.
   \end{equation*}
\end{lemma}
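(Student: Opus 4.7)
This is Hua's classical mean-value inequality, so my plan is to prove it by induction on $j$. The base case $j=1$ is immediate: by orthogonality,
\[\int_0^1\Big|\sum_{m=1}^Y e(\alpha m^k)\Big|^2 d\alpha\]
counts pairs $(m_1,m_2)\in[1,Y]^2$ with $m_1^k=m_2^k$, which is exactly $Y\le Y^{1+\varepsilon}$.

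For the inductive step, passing from $j$ to $j+1$ (valid whenever $j+1\le k$), the essential mechanism is Weyl differencing. Squaring the exponential sum gives
\[|f_k(\alpha)|^2=\sum_{|h|<Y}\sum_{n\in I_h}e\bigl(\alpha\,((n+h)^k-n^k)\bigr),\]
where $I_h\subseteq[1,Y]$ is an interval of length at most $Y-|h|$; for $h\ne 0$ the polynomial $(n+h)^k-n^k$ has degree $k-1$ in $n$ with leading coefficient $kh\ne 0$. Writing $|f_k|^{2^{j+1}}=(|f_k|^2)^{2^j}$, expanding, and applying H\"older in the $h$-variables reduces the $2^{j+1}$-th moment of $f_k$ to a weighted sum involving $2^j$-th moments of polynomial exponential sums of degree $k-1$. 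Since $j\le k-1$, I would apply the inductive hypothesis to these lower-degree sums: the saving in degree that differencing produces is calibrated to offset both the H\"older factor and the summation over $h$, and one obtains the target bound $Y^{2^{j+1}-(j+1)+\varepsilon}$. To make the induction actually close, one must work with the slightly stronger statement that the same bound holds for exponential sums $\sum_{n\le Y}e(\alpha P(n))$ attached to any integer polynomial $P$ of degree $\le k$ with nonzero leading coefficient, since the differenced polynomial $(n+h)^k-n^k$ is not a pure power.

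The main obstacle is the delicate arithmetic in this Cauchy--Schwarz/H\"older step. One must isolate the degenerate contribution from $h=0$ (for which the inner exponent vanishes, giving the trivial diagonal contribution $Y^{2^j}$, compatible with the target because $2^j\le 2^{j+1}-(j+1)$ for all $j+1\le k$) and organise the non-degenerate terms $h\neq 0$ so that the averaging in $h$ loses no more than a single power of $Y$ beyond what is dictated by the inductive bound. A naive splitting loses an extra factor of $Y$ at this stage; the correct arrangement, due originally to Hua, exploits the fact that the inductive bound for the differenced polynomial sums is valid uniformly in $h$ and combines with the $h$-summation through an iterated application of the differencing identity. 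The full execution of this strategy is standard and appears in the textbook proof of Hua's lemma in Vaughan's monograph on the Hardy--Littlewood method, from which the present lemma follows directly.
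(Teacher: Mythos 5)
The paper offers no proof of this lemma at all: it simply cites Vaughan's monograph (Lemma~2.5 there), which is exactly the classical Hua inequality. Your outline of the underlying argument (diagonal counting for $j=1$, then Weyl differencing with the inductive statement strengthened to general polynomials of degree $\leqslant k$ and uniformity in the shift $h$) is an accurate description of the standard proof, and since you too ultimately defer to Vaughan for the delicate execution, your route is essentially the same as the paper's.
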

\begin{proof}
   See Vaughan~\cite{Vaughan}, Lemma~2.5.
\end{proof}

\begin{lemma}\label{wooley-estimate}
  Let~$j$~be an integer with~$j\geqslant2.$~Suppose that there exist integers~$a,q$~with~$q\geqslant1,\,(a,q)=1$~such that~$|\alpha-\frac{a}{q}|\leqslant\frac{1}{q^2}$~and~$q\leqslant x.$~Then one has
  \begin{equation*}
     f_j(\alpha)\ll x^{1/j+\varepsilon}\big(q^{-1}+x^{-1/j}+qx^{-1}\big)^{1/2j(j-1)}.
  \end{equation*}
\end{lemma}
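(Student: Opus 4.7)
This is a Wooley-type refinement of Weyl's inequality (Lemma \ref{Weyl-inequality}), improving the Weyl exponent $1/2^{j-1}$ to $1/\big(2j(j-1)\big)$, which is decisive for large $j$. My plan is to combine Weyl differencing with a Vinogradov-type mean value estimate, instead of iterating Weyl differencing $j-1$ times down to a linear sum.

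Write $Y=x^{1/j}$ and $s=j(j-1)$. First I would apply Weyl differencing once to write
\begin{equation*}
|f_j(\alpha)|^{2}\ll Y+\sum_{1\leqslant h\leqslant Y}\big|T_h(\alpha)\big|,\qquad T_h(\alpha)=\sum_{n\leqslant Y-h}e\big(\alpha\,\Delta_h(n)\big),
\end{equation*}
where $\Delta_h(n)=(n+h)^j-n^j$ is a polynomial in $n$ of degree $j-1$ with leading coefficient $jh$. The next step is a Hölder interpolation in the $h$-variable together with raising to the $2s$-th power, reducing the pointwise control of $f_j(\alpha)$ to an average bound of the form $\int_0^1|T(\alpha)|^{2s}\,d\alpha$ for a Weyl sum $T$ of degree $j-1$; here the key input is a mean value estimate
\begin{equation*}
\int_0^1\!|f_j(\alpha)|^{2s}\,d\alpha\ll Y^{2s-j+\varepsilon},
\end{equation*}
which is a Vinogradov/Wooley-type mean value (a consequence of Hua's inequality, Lemma \ref{Hua-inequality}, for $s\geqslant 2^{j-1}$, and of the efficient congruencing machinery otherwise).

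The crucial step is converting this mean value bound into a pointwise estimate using the Diophantine hypothesis $|\alpha-a/q|\leqslant q^{-2}$, $q\leqslant x$. Expanding $|f_j(\alpha)|^{2s}=\sum_N R_s(N)e(\alpha N)$ where $R_s(N)$ counts the signed representations of $N$ as a sum of $2s$ $j$-th powers, I would group $N$ according to its residue class modulo $q$ and use the rational approximation to control the oscillation $e(\alpha N)=e((a/q)N)e(\beta N)$. A large-values / Parseval-type argument then produces an inequality of the form
\begin{equation*}
|f_j(\alpha)|^{2s}\ll Y^{2s-j+\varepsilon}\,\big(q^{-1}+Y^{-1}+qY^{-j}\big)\cdot Y^{j},
\end{equation*}
the factor $(q^{-1}+Y^{-1}+qY^{-j})$ quantifying the "density" of $N\leqslant sY^j$ that are nearly resonant with $\alpha$.

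Finally, taking the $2s$-th root with $2s=2j(j-1)$ and reverting to the $x$-notation via $Y=x^{1/j}$ yields the asserted bound. The main obstacle is the balancing step that controls the loss between the mean value estimate and the pointwise conclusion; this is exactly the content of Wooley's efficient differencing argument, and in the form stated it is available as Theorem~5.2 of Vaughan~\cite{Vaughan}, from which the lemma can be quoted directly.
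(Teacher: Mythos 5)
The paper does not prove this lemma at all: it is quoted verbatim as Theorem~1.5 of Wooley~\cite{Wooley}, whose proof rests on the efficient congruencing bound for Vinogradov's mean value theorem. Your sketch has the right overall shape (one Weyl differencing step, a mean value estimate for the resulting degree-$(j-1)$ sum, and the conversion mechanism of Vaughan's Theorem~5.2), but the decisive input is misidentified, and that is where the argument breaks. The mean value you invoke, $\int_0^1|f_j(\alpha)|^{2s}\,\mathrm{d}\alpha\ll Y^{2s-j+\varepsilon}$, counts solutions of the \emph{single} equation $\sum_{i\leqslant s}(x_i^j-y_i^j)=0$ and saves only a factor $Y^{j}$ over the trivial bound; it is nowhere near strong enough. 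What the conversion argument actually requires is the Vinogradov mean value $J_{s,j-1}(Y)$ for the full system $\sum_{i\leqslant s}(x_i^l-y_i^l)=0$, $1\leqslant l\leqslant j-1$, with the essentially optimal bound $J_{s,j-1}(Y)\ll Y^{2s-j(j-1)/2+\varepsilon}$ at $s=j(j-1)$ (a saving of $Y^{j(j-1)/2}$). That bound is precisely Wooley's Theorem~1.1, proved by efficient congruencing; it is emphatically not a consequence of Hua's inequality (Lemma~\ref{Hua-inequality}), which lives at the exponents $2^j$ and gives only the Weyl-type saving. With your weaker single-equation input, the "large values/Parseval" step cannot produce the factor $\big(q^{-1}+Y^{-1}+qY^{-j}\big)$ raised to the power $1$ inside the $2s$-th root; you would recover at best the classical exponents.

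Your closing claim that the lemma "can be quoted directly" from Theorem~5.2 of Vaughan~\cite{Vaughan} is also not correct. Vaughan's Theorem~5.2 is only the conversion device: it bounds the Weyl sum in terms of $J_{s,j-1}$ and the quantity $q^{-1}+Y^{-1}+qY^{-j}$, and in 1997 the available mean value estimates yielded an exponent of size roughly $1/(2j^{2}\log j)$, not $1/(2j(j-1))$. The exponent in the lemma as stated became available only with Wooley's 2012 theorem, which is exactly what the paper cites. So the proposal is not a proof: the one genuinely deep ingredient is replaced by an estimate that does not suffice, and the fallback citation points to a result that gives a weaker conclusion.
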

\begin{proof}
   See Wooley~\cite{Wooley}, Theorm~1.5.
\end{proof}

\section{Proof of Theorem~\ref{mixed-power-theorem}}

It is obvious that
\begin{equation*}
   \mathcal{S}_k(x,\mathfrak{M})=\sum_{1\leqslant q\leqslant Q}\sum_{\substack{a=1\\(a,q)=1}}^q
   \int_{\frac{a}{q}-\frac{1}{q\tau}}^{\frac{a}{q}+\frac{1}{q\tau}}
   f_2^3(\alpha)f_k(\alpha)f(-\alpha)\mathrm{d}\alpha.
\end{equation*}

For~$\alpha=\frac{a}{q}+\beta\in\mathfrak{M},$~by Lemma~\ref{fk-asymp}~and Lemma~\ref{zhai-lemma}, we have
\begin{eqnarray*}
  &    & f_2^3(\alpha)f_k(\alpha)f(-\alpha)  \\
   & = & \Big(V_2(\alpha,q,a)+O(q^{1/2+\varepsilon})\Big)^3 \Big(V_k(\alpha,q,a)+O\big(q^{1/2+\varepsilon}(1+x|\beta|)^{1/2}\big)\Big) \\
   &   & \times\bigg(\frac{x\log x}{q}\int_0^3e(-\mu x\beta)\mathrm{d}\mu+\frac{x}{q}\int_0^3e(-\mu x\beta)\log\mu\mathrm{d}\mu     \\
   &   & \quad\,\,\,+\frac{-2\log q+2\gamma}{q}x\int_0^3e(-\mu x\beta)\mathrm{d}\mu+O(\Delta)\bigg) \\
\end{eqnarray*}
\begin{eqnarray*}
   & = & x^{3/2+1/k}\frac{S_2^3(q,a)S_k(q,a)}{q^4}\bigg(\int_0^1e(x\beta\mu^2)\mathrm{d}\mu\bigg)^3\bigg(\int_0^1e(x\beta\mu^k)\mathrm{d}\mu\bigg)  \\
   &   & \times\bigg(\frac{x\log x}{q}\int_0^3e(-\mu x\beta)\mathrm{d}\mu+\frac{x}{q}\int_{0}^3e(-\mu x\beta)\log\mu\mathrm{d}\mu  \\
   &   &  \quad\,\,\,+\frac{-2\log q+2\gamma}{q}x\int_0^3e(-\mu x\beta)\mathrm{d}\mu\bigg) +O\big(x^{5/2+\varepsilon}q^{-2}(1+x|\beta|)^{-2}\big)  \\
   &   &   +O\big(x^{1+1/k+\varepsilon}q^{1/2-1/k}(1+x|\beta|)^{-1-1/k}+x^{1+\varepsilon}q(1+x|\beta|)^{-1/2}\big)     \\
   &   &   +O\Big(\Delta\big(x^{3/2+1/k}q^{-3/2-1/k}(1+x|\beta|)^{-3/2-1/k}+q^2x^{\varepsilon}(1+x|\beta|)^{1/2}    \\
   &   &   \qquad\quad\quad +x^{1/k+\varepsilon}q^{3/2-1/k}(1+x|\beta|)^{-1/k}+x^{3/2+\varepsilon}q^{-1}(1+x|\beta|)^{-1}      \big)\Big),
\end{eqnarray*}
where~$\Delta$~is defined in~(\ref{Delta}). Then we have
\begin{eqnarray}\label{single-major-arc}
  &    &   \int_{\frac{a}{q}-\frac{1}{q\tau}}^{\frac{a}{q}+\frac{1}{q\tau}}f_2^3(\alpha)f_k(\alpha)f(-\alpha)\mathrm{d}\alpha   \nonumber  \\
  &  = &   \int_{-\frac{1}{q\tau}}^{\frac{1}{q\tau}}f_2^3\Big(\frac{a}{q}+\beta  \Big) f_k\Big(\frac{a}{q}+\beta  \Big)
           f\Big(-\frac{a}{q}-\beta  \Big)\mathrm{d}\beta   \nonumber \\
  &  =  &  \frac{S_2^3(q,a)S_k(q,a)}{q^5}x^{5/2+1/k}\log x   \nonumber  \\
  &     &  \qquad \times\int_{-\frac{1}{q\tau}}^{\frac{1}{q\tau}}\bigg(\int_{0}^{1}e(x\beta\mu^2)\mathrm{d}\mu\bigg)^3
           \bigg(\int_{0}^{1}e(x\beta\mu^k)\mathrm{d}\mu\bigg)\bigg(\int_{0}^{3}e(x\beta\mu)\mathrm{d}\mu\bigg)\mathrm{d}\beta   \nonumber  \\
  &     &  + \frac{S_2^3(q,a)S_k(q,a)}{q^5}x^{5/2+1/k}  \nonumber  \\
  &     &  \qquad\times\int_{-\frac{1}{q\tau}}^{\frac{1}{q\tau}}\bigg(\int_{0}^{1}e(x\beta\mu^2)\mathrm{d}\mu\bigg)^3
           \bigg(\int_{0}^{1}e(x\beta\mu^k)\mathrm{d}\mu\bigg)\bigg(\int_{0}^{3}e(x\beta\mu)\log\mu\mathrm{d}\mu\bigg)\mathrm{d}\beta   \nonumber  \\
  &     &  +(-2\log q+2\gamma) \frac{S_2^3(q,a)S_k(q,a)}{q^5}x^{5/2+1/k}  \nonumber   \\
  &     &  \qquad \times\int_{-\frac{1}{q\tau}}^{\frac{1}{q\tau}}\bigg(\int_{0}^{1}e(x\beta\mu^2)\mathrm{d}\mu\bigg)^3
           \bigg(\int_{0}^{1}e(x\beta\mu^k)\mathrm{d}\mu\bigg)\bigg(\int_{0}^{3}e(x\beta\mu)\mathrm{d}\mu\bigg)\mathrm{d}\beta    \nonumber  \\
  &     &  +O\big(x^{3/2+\varepsilon}q^{-2}+x^{1/k+\varepsilon}q^{1/2-1/k}+x^{3/2+1/k+\varepsilon}q^{-1-1/k}\tau^{-1}   \nonumber  \\
  &     &  \qquad\quad+x^{5/6+1/k+\varepsilon}q^{-5/6-1/k}+x^{3/2+\varepsilon}q^{-1/2}\tau^{-1}+x^{5/6+\varepsilon}q^{-1/3}\big)  \nonumber   \\
  &  =  &  \frac{S_2^3(q,a)S_k(q,a)}{q^5}x^{3/2+1/k}\log x \int_{-\frac{x}{q\tau}}^{\frac{x}{q\tau}}\mathfrak{J}_1(\beta)\mathrm{d}\beta  \nonumber  \\
  &     &  +\frac{S_2^3(q,a)S_k(q,a)}{q^5}x^{3/2+1/k} \int_{-\frac{x}{q\tau}}^{\frac{x}{q\tau}}\mathfrak{J}_2(\beta)\mathrm{d}\beta \nonumber \\
  &     &  +(-2\log q+2\gamma) \frac{S_2^3(q,a)S_k(q,a)}{q^5}x^{3/2+1/k}
           \int_{-\frac{x}{q\tau}}^{\frac{x}{q\tau}}\mathfrak{J}_1(\beta)\mathrm{d}\beta \nonumber \\
  &     &  +O\big(x^{3/2+\varepsilon}q^{-2}+x^{1/k+\varepsilon}q^{1/2-1/k}+x^{3/2+1/k+\varepsilon}q^{-1-1/k}\tau^{-1}  \nonumber  \\
  &     &  \qquad\quad+x^{5/6+1/k+\varepsilon}q^{-5/6-1/k}+x^{3/2+\varepsilon}q^{-1/2}\tau^{-1}+x^{5/6+\varepsilon}q^{-1/3}\big).
\end{eqnarray}
Applying Lemma~\ref{Mu-estimate}, we have
\begin{equation*}
   \mathfrak{J}_1(\beta)\ll\frac{1}{(1+|\beta|)^{5/2+1/k}},\qquad \mathfrak{J}_2(\beta)\ll\frac{\log(2+|\beta|)}{(1+|\beta|)^{5/2+1/k}}.
\end{equation*}
Therefore, we have
\begin{equation}\label{J_1-tail}
   \int_{|\beta|>\frac{x}{q\tau}}\mathfrak{J}_1(\beta)\mathrm{d}\beta\ll\int_{|\beta|>\frac{x}{q\tau}}\frac{1}{(1+|\beta|)^{5/2+1/k}}\mathrm{d}\beta
   \ll\frac{1}{\Big(1+\displaystyle\frac{x}{q\tau}\Big)^{3/2+1/k}}
\end{equation}
and
\begin{equation}\label{J_2-tail}
  \int_{|\beta|>\frac{x}{q\tau}}\mathfrak{J}_2(\beta)\mathrm{d}\beta\ll\int_{|\beta|>\frac{x}{q\tau}}\frac{\log(2+|\beta|)}{(1+|\beta|)^{5/2+1/k}}
  \mathrm{d}\beta \ll \frac{\log\Big(2+\displaystyle\frac{x}{q\tau}\Big)}{\Big(1+\displaystyle\frac{x}{q\tau}\Big)^{3/2+1/k}}.
\end{equation}
Hence, from (\ref{single-major-arc}),~(\ref{J_1-tail}) and (\ref{J_2-tail}), we get
\begin{eqnarray}\label{Single-major-1}
   &   &  \int_{\frac{a}{q}-\frac{1}{q\tau}}^{\frac{a}{q}+\frac{1}{q\tau}} f_2^3(\alpha)f_k(\alpha)f(-\alpha)\mathrm{d}\alpha   \nonumber    \\
   & = &  \mathfrak{J}_1\frac{S_2^3(q,a)S_k(q,a)}{q^5}x^{3/2+1/k}\log x+\mathfrak{J}_2\frac{S_2^3(q,a)S_k(q,a)}{q^5}x^{3/2+1/k} \nonumber  \\
   &   &  +(-2\log q+2\gamma)\mathfrak{J}_1\frac{S_2^3(q,a)S_k(q,a)}{q^5} x^{3/2+1/k}  \nonumber  \\
   &   &  +O\big(x^\varepsilon q^{-1}\tau^{3/2+1/k}+x^{3/2+\varepsilon}q^{-2}
                 +x^{1/k+\varepsilon}q^{1/2-1/k} +x^{3/2+1/k+\varepsilon}q^{-1-1/k}\tau^{-1}  \nonumber  \\
   &   &  +x^{5/6+1/k+\varepsilon}q^{-5/6-1/k} +x^{3/2+\varepsilon}q^{-1/2}\tau^{-1}+x^{5/6+\varepsilon}q^{-1/3}\big).
\end{eqnarray}
By Lemma~\ref{S_k-estimate}, it follows that
\begin{equation}\label{singular-series-tail}
  \sum_{q>Q}\sum_{\substack{a=1\\(a,q)=1}}^q\frac{S_2^3(q,a)S_k(q,a)}{q^5}\ll\sum_{q>Q}q^{-3/2-1/k}\ll Q^{-1/2-1/k}.
\end{equation}
Therefore, from (\ref{Single-major-1}) and (\ref{singular-series-tail}), we obtain
\begin{eqnarray}\label{major-arc-last}
       \mathcal{S}_k(x,\mathfrak{M})
   & = & \sum_{1\leqslant q\leqslant Q}\sum_{\substack{a=1\\(a,q)=1}}^q \int_{\frac{a}{q}-\frac{1}{q\tau}}^{\frac{a}{q}+\frac{1}{q\tau}}
                f_2^3(\alpha)f_k(\alpha)f(-\alpha)\mathrm{d}\alpha  \nonumber    \\
   & = & \mathfrak{S}_1\mathfrak{J}_1x^{3/2+1/k}\log x+(\mathfrak{S}_1\mathfrak{J}_2+\mathfrak{S}_2\mathfrak{J}_1)x^{3/2+1/k}    \nonumber    \\
   &   & +O\big(   x^{3/2+1/k+\varepsilon}Q^{-1/2-1/k}+x^\varepsilon Q\tau^{3/2+1/k}+x^{1/k+\varepsilon}Q^{5/2-1/k}
                                     \nonumber    \\
   &   &            \qquad  +x^{3/2+1/k+\varepsilon}Q^{1-1/k}\tau^{-1}+x^{5/6+1/k+\varepsilon}Q^{7/6-1/k}    \nonumber    \\
   &   &            \qquad +x^{3/2+\varepsilon}Q^{3/2}\tau^{-1}+x^{5/6+\varepsilon}Q^{5/3}  \big).
\end{eqnarray}

  It remains to estimate the integral on the minor arc~$\mathfrak{m}.$~At this time, for~$\alpha\in\mathfrak{m},$~we have~$Q<q\leqslant\tau.$~We consider
  four different cases as follows.

  \textbf{Case 1.} If~$3\leqslant k\leqslant5,$~by noting the fact that~$Q\tau\asymp x,\, Q<x^{2/(k+2)}$~and Lemma~\ref{Weyl-inequality}, we have
\begin{eqnarray}
   f_2(\alpha) & \ll & x^{1/2+\varepsilon}\big(q^{-1}+x^{-1/2}+qx^{-1}\big)^{1/2}  \nonumber  \\
               & \ll & x^{1/2+\varepsilon}\big(Q^{-1/2}+x^{-1/4}+\tau^{1/2}x^{-1/2}\big)     \nonumber  \\
               & \ll & x^{1/2+\varepsilon}Q^{-1/2}.
\end{eqnarray}
Also, we have
\begin{equation}\label{f-a}
   \int_0^1|f(-\alpha)|^2\mathrm{d}\alpha=\sum_{n\leqslant4x}d^2(n)\ll x\log^3x.
\end{equation}
Therefore, it follows from H\"{o}lder's inequality, Lemma~\ref{Hua-inequality} and (\ref{f-a}) that
\begin{eqnarray}
     &     &  \mathcal{S}_k(x,\mathfrak{m})  =  \int_{\mathfrak{m}}f_2^3(\alpha)f_{k}(\alpha)f(-\alpha)\mathrm{d}\alpha  \nonumber  \\
     & \ll & \max_{\alpha\in\mathfrak{m}}|f_2(\alpha)|^2
             \bigg(\int_{0}^1|f_2(\alpha)|^4\mathrm{d}\alpha\bigg)^{1/4} \bigg(\int_{0}^1|f_k(\alpha)|^4\mathrm{d}\alpha\bigg)^{1/4}
             \bigg(\int_{0}^1|f(-\alpha)|^2\mathrm{d}\alpha\bigg)^{1/2}    \nonumber  \\
     & \ll & x^{1+\varepsilon}Q^{-1}\cdot x^{3/4+1/(2k)+\varepsilon} \ll x^{7/4+1/(2k)+\varepsilon}Q^{-1}.
\end{eqnarray}

  \textbf{Case 2.} If~$k=6,$~from Lemma~\ref{Weyl-inequality} we have
\begin{eqnarray}
   f_6(\alpha) & \ll & x^{1/6+\varepsilon}\big(Q^{-1/32}+x^{-1/192}+\tau^{1/32}x^{-1/32}\big)   \nonumber   \\
               & \ll & x^{1/6+\varepsilon}\big(Q^{-1/32}+x^{-1/192}\big) .
\end{eqnarray}
Therefore, it follows from Cauchy's inequality, Lemma~\ref{Hua-inequality} and (\ref{f-a}) that
\begin{eqnarray}
     &     &  \mathcal{S}_6(x,\mathfrak{m})  =  \int_{\mathfrak{m}}f_2^3(\alpha)f_{6}(\alpha)f(-\alpha)\mathrm{d}\alpha  \nonumber  \\
     & \ll & \max_{\alpha\in\mathfrak{m}}|f_2(\alpha)|\cdot \max_{\alpha\in\mathfrak{m}}|f_6(\alpha)|
             \bigg(\int_{0}^1|f_2(\alpha)|^4\mathrm{d}\alpha\bigg)^{1/2}
             \bigg(\int_{0}^1|f(-\alpha)|^2\mathrm{d}\alpha\bigg)^{1/2}    \nonumber  \\
     & \ll & x^{5/3+\varepsilon}Q^{-17/32}+x^{319/192+\varepsilon}Q^{-1/2}.
\end{eqnarray}

 \textbf{Case 3.} If~$k=7,$~from Lemma~\ref{Weyl-inequality} we have
\begin{eqnarray}
   f_7(\alpha) & \ll & x^{1/7+\varepsilon}\big(Q^{-1/64}+x^{-1/448}+\tau^{1/64}x^{-1/64}\big)   \nonumber   \\
               & \ll & x^{1/7+\varepsilon}\big(Q^{-1/64}+x^{-1/448}\big) .
\end{eqnarray}
Therefore, it follows from Cauchy's inequality, Lemma~\ref{Hua-inequality} and (\ref{f-a}) that
\begin{eqnarray}
     &     &  \mathcal{S}_7(x,\mathfrak{m})  =  \int_{\mathfrak{m}}f_2^3(\alpha)f_{7}(\alpha)f(-\alpha)\mathrm{d}\alpha  \nonumber  \\
     & \ll & \max_{\alpha\in\mathfrak{m}}|f_2(\alpha)|\cdot \max_{\alpha\in\mathfrak{m}}|f_7(\alpha)|
             \bigg(\int_{0}^1|f_2(\alpha)|^4\mathrm{d}\alpha\bigg)^{1/2}
             \bigg(\int_{0}^1|f(-\alpha)|^2\mathrm{d}\alpha\bigg)^{1/2}    \nonumber  \\
     & \ll & x^{23/14+\varepsilon}Q^{-33/64}+x^{735/448+\varepsilon}Q^{-1/2}.
\end{eqnarray}

\textbf{Case 4.} If~$k\geqslant8,$~from Lemma~\ref{wooley-estimate} we have
\begin{eqnarray}
   f_k(\alpha) & \ll & x^{1/k+\varepsilon}\big(Q^{-1}+x^{-1/k}+\tau x^{-1}\big)^{1/(2k(k-1))}   \nonumber   \\
               & \ll & x^{1/k+\varepsilon}\big(Q^{-1/(2k(k-1))}+x^{-1/(2k^2(k-1))}\big) .
\end{eqnarray}
Therefore, it follows from H\"{o}lder's inequality, Lemma~\ref{Hua-inequality} and (\ref{f-a}) that
\begin{eqnarray}
     &     &  \mathcal{S}_k(x,\mathfrak{m})  =  \int_{\mathfrak{m}}f_2^3(\alpha)f_{k}(\alpha)f(-\alpha)\mathrm{d}\alpha  \nonumber  \\
     & \ll & \max_{\alpha\in\mathfrak{m}}|f_2(\alpha)|\cdot \max_{\alpha\in\mathfrak{m}}|f_k(\alpha)|
             \bigg(\int_{0}^1|f_2(\alpha)|^4\mathrm{d}\alpha\bigg)^{1/2}
             \bigg(\int_{0}^1|f(-\alpha)|^2\mathrm{d}\alpha\bigg)^{1/2}    \nonumber  \\
     & \ll & x^{3/2+1/k+\varepsilon}Q^{-(k^2-k+1)/(2k(k-1))}+x^{3/2+1/k-1/(2k^2(k-1))+\varepsilon}Q^{-1/2}.
\end{eqnarray}

 The rest thing that we need to do is to choose optimal parameters~$\tau$~and~$Q.$~By noting the condition (\ref{parameter}), we can
 use~$xQ^{-1}$~to substitute~$\tau$~in (\ref{major-arc-last}). Then, by a simple  calculation, it is easy to use Lemma~\ref{compare} to obtain the desired
 asymptotic formula of~$\mathcal{S}_k(x).$~This completes the proof of Theorem~\ref{mixed-power-theorem}.

\bigskip
\bigskip

\textbf{Acknowledgement}

   The authors would like to express the most and the greatest sincere gratitude to Professor Wenguang Zhai for his valuable
advice and constant encouragement.

\end{document}